\documentclass[12pt]{article}

\usepackage{amssymb}
\usepackage{amsmath} 
\usepackage{amsfonts}
\usepackage{amsthm}
\usepackage[colorlinks]{hyperref} 

\newtheorem{thm}{Theorem}[section]
\newtheorem{cor}[thm]{Corollary}
\newtheorem{lem}[thm]{Lemma}

\theoremstyle{definition}
\newtheorem{defn}[thm]{Definition}

\newtheorem{remk}[thm]{Remark}

\newcommand{\HH}{\mathcal{H}}

\newcommand{\R}{\mathbb{R}}

\newcommand{\leqs}{\leqslant}
\newcommand{\geqs}{\geqslant}

\newcommand{\ap}{\alpha}

\newcommand{\ep}{\epsilon}
\newcommand{\ld }{\lambda}

\newcommand{\sm}{\,\sigma\,}

\newcommand{\norm}[1]{\lVert #1 \rVert}

\DeclareMathOperator{\Sp}{Sp}

\begin{document}

\begin{center}
{\Large Positivity, Betweenness and Strictness }\\

\vspace{0.2cm}

{\Large of Operator Means}
\end{center}

\begin{center}
{\large Pattrawut Chansangiam}\footnote{email: kcpattra@kmitl.ac.th;
Department of Mathematics, Faculty of Science,
King Mongkut's Institute of Technology Ladkrabang,
    Bangkok 10520, Thailand.}
\end{center}

\noindent \textbf{Abstract:}
An operator mean is a binary operation assigned to each pair of positive operators
satisfying monotonicity, continuity from above, the transformer inequality
and the fixed-point property.
It is well known that there are one-to-one correspondences between
operator means, operator monotone functions and Borel measures.
In this paper, we provide various characterizations for the concepts of positivity, betweenness and strictness of operator means in terms of operator monotone functions, Borel measures and certain operator equations.
\\

\noindent \textbf{Keywords:} operator connection; operator mean; operator monotone function.

\noindent \textbf{MSC2010:} 47A63, 47A64.

\section{Introduction}

According to the definition of a mean for positive real numbers in \cite{Toader-Toader}, a mean $M$ is defined to be
satisfied the following properties
    \begin{itemize}
        \item   \emph{positivity}: $s,t>0 \implies M(s,t)>0$ ;
        \item   \emph{betweenness}: $s \leqs t \implies s \leqs M(s,t) \leqs t$.
    \end{itemize}
A mean $M$ is said to be
    \begin{itemize}
    	\item   \emph{strict at the left} if for each $a>0,b>0$, 
        $M(a,b) = a \implies a=b$
        \item   \emph{strict at the right} if for each 
        		$a>0,b>0$, $M(a,b) = b \implies a=b$,
        \item   \emph{strict} if it is both strict at the right and the left.
    \end{itemize}
A general theory of operator means 
was given by Kubo and Ando \cite{Kubo-Ando}.
Let $B(\HH)$ be the algebra of bounded linear operators on
a Hilbert space $\HH$. The set of positive operators on $\HH$ is denoted by $B(\HH)^+$.
Denote the spectrum of an operator $X$ by $\Sp(X)$.
For Hermitian operators $A,B \in B(\HH)$,
the partial order $A \leqs B$ indicates that $B-A \in B(\HH)^+$.
The notation $A>0$ suggests that $A$ is a strictly positive operator.
A \emph{connection} is a binary operation $\sm$ on $B(\HH)^+$
such that for all positive operators $A,B,C,D$:
\begin{enumerate}
	\item[(M1)] \emph{monotonicity}: $A \leqs C, B \leqs D \implies A \sm B \leqs C \sm D$
	\item[(M2)] \emph{transformer inequality}: $C(A \sm B)C \leqs (CAC) \sm (CBC)$
	\item[(M3)] \emph{continuity from above}:  for $A_n,B_n \in B(\HH)^+$,
                if $A_n \downarrow A$ and $B_n \downarrow B$,
                 then $A_n \sm B_n \downarrow A \sm B$.
                 Here, $A_n \downarrow A$ indicates that $A_n$ is a decreasing sequence
                 and $A_n$ converges strongly to $A$.
\end{enumerate}
Two trivial examples are the left-trivial mean $\omega_l : (A,B) \mapsto A$ and the right-trivial mean $\omega_r: (A,B) \mapsto B$.
Typical examples of a connection are the sum $(A,B) \mapsto A+B$ and the parallel sum
\begin{align*}
    A \,:\,B = (A^{-1}+B^{-1})^{-1}, \quad A,B>0.
\end{align*}
In fact, the parallel sum, introduced by Anderson and Duffin \cite{Anderson-Duffin} for analyzing electrical networks, is a model 
for general connections.
From the transformer inequality, every connection is \emph{congruence invariant} in the sense that for each $A,B \geqs 0$
and $C>0$ we have
\begin{align*}
	C(A \sm B)C \:=\: (CAC) \sm (CBC).
\end{align*}

A \emph{mean} in Kubo-Ando sense is a connection $\sigma$ with 
fixed-point property $A \sm A =A$ for all $A \geqs 0$.
The class of Kubo-Ando means cover many well-known means
in practice, e.g.
	\begin{itemize}
		\item	$\ap$-weighted arithmetic means: $A \triangledown_{\ap} B = (1-\ap)A + \ap B$
		\item	$\ap$-weighted geometric means:
    			$A \#_{\ap} B =  A^{1/2}
    			({A}^{-1/2} B  {A}^{-1/2})^{\ap} {A}^{1/2}$
		\item	$\ap$-weighted harmonic means: $A \,!_{\ap}\, 
				B = [(1-\ap)A^{-1} + \ap B^{-1}]^{-1}$ 
		\item	logarithmic mean: $(A,B) \mapsto A^{1/2}f(A^{-1/2}BA^{-1/2})A^{1/2}$ 
		where $f: \R^+ \to \R^+$, $f(x)=(x-1)/\log{x}$, $f(0) \equiv 0$ and $f(1) \equiv 1$.
		Here, $\R^+=[0, \infty)$.
	\end{itemize}

It is a fundamental that there are one-to-one correspondences between the following objects:
\begin{enumerate}
	\item[(1)]	operator connections on $B(\HH)^+$
	\item[(2)]	operator monotone functions from $\R^+$ to $\R^+$
	\item[(3)]	finite (positive) Borel measures on $[0,1]$.
\end{enumerate}
Recall that a continuous function $f: \R^+ \to \R^+$ is said to be \emph{operator monotone} if
\begin{align*}
	A \leqs B \implies f(A) \leqs f(B)
\end{align*}
for all positive operators $A,B \in B(\HH)$ and for all Hilbert spaces $\HH$.
This concept was introduced in \cite{Lowner}; see also \cite{Bhatia,Hiai,Hiai-Yanagi}.
Every operator monotone function from $\R^+$ to $\R^+$
is always differentiable (see e.g. \cite{Hiai}) and concave in usual sense (see \cite{Hansen-Pedersen}).

A connection $\sigma$ on $B(\HH)^+$ can be characterized
via operator monotone functions as follows:

\begin{thm}[\cite{Kubo-Ando}] \label{thm: Kubo-Ando f and sigma}
	Given a connection $\sigma$, there is a unique operator monotone function $f: \R^+ \to \R^+$ satisfying
				\begin{align*}
    			f(x)I = I \sm (xI), \quad x \geqs 0.
				\end{align*}
	Moreover, the map $\sigma \mapsto f$ is a bijection.
\end{thm}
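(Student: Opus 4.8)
The plan is to establish the bijection in two halves: first the construction of $f$ from $\sm$ and the verification that it is operator monotone, then the inverse direction showing that $f$ determines $\sm$.

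For the forward direction, I would define $g(x) I := I \sm (xI)$ for $x \geqs 0$. The first thing to check is that $I \sm (xI)$ is indeed a scalar multiple of $I$: this follows from congruence invariance, since for any unitary $U$ we have $U(I \sm xI)U^* = (UIU^*) \sm (UxIU^*) = I \sm xI$, so $I \sm xI$ commutes with every unitary and hence is a scalar. Call that scalar $f(x) \geqs 0$; positivity is automatic because $\sm$ maps into $B(\HH)^+$. Monotonicity of $f$ as a \emph{function} of the real variable $x$ is immediate from (M1). The key point is that $f$ is \emph{operator} monotone, i.e. $A \leqs B \implies f(A) \leqs f(B)$ for all positive operators on all Hilbert spaces; here I would argue that for a positive operator $A$ with spectral decomposition, $f(A) = I \sm A$ — this identity extends from $A = xI$ first to $A$ with finite spectrum by working blockwise on the spectral projections (using (M1) sandwiched between the smallest and largest eigenvalues on each block, or more cleanly the fact that $\sm$ respects orthogonal direct sums, which again comes from congruence invariance applied to projections), then to general positive $A$ by (M3), approximating $A$ from above by operators with finite spectrum. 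Granting $f(A) = I \sm A$, operator monotonicity of $f$ is just (M1) with the first slot fixed at $I$. Continuity of $f$ on $\R^+$ then follows since operator monotone functions on an interval are automatically continuous (indeed real-analytic in the interior), or can be extracted directly from (M3).

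For the inverse direction — injectivity and surjectivity of $\sm \mapsto f$ — the crucial structural fact is that $f$ determines $\sm$ on all pairs via
\begin{align*}
    A \sm B = A^{1/2} f(A^{-1/2} B A^{-1/2}) A^{1/2}, \qquad A > 0,\ B \geqs 0,
\end{align*}
and then on pairs with $A$ merely positive by approximating $A$ from above by $A + \ep I$ and invoking (M3). This representation comes from congruence invariance: $A \sm B = A^{1/2}\bigl( I \sm (A^{-1/2} B A^{-1/2}) \bigr) A^{1/2} = A^{1/2} f(A^{-1/2} B A^{-1/2}) A^{1/2}$. Hence two connections with the same $f$ agree everywhere, giving injectivity. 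For surjectivity one goes the other way: given an operator monotone $f : \R^+ \to \R^+$, \emph{define} $\sm$ by the displayed formula (extended by the $\ep$-limit), and check the axioms (M1)–(M3). Monotonicity and the transformer inequality for this formula are the substantive verifications — they rest on standard operator-monotone-function machinery (e.g. that $f$ operator monotone implies $f$ operator concave, and Choi-type or Hansen–Pedersen inequalities for $C f(X) C \leqs f(CXC)$ when $C$ is a contraction); continuity from above follows from continuity properties of the functional calculus together with monotone convergence.

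The main obstacle I anticipate is the passage from scalars to general positive operators in establishing $f(A) = I \sm A$, and symmetrically the well-definedness of the formula $A \sm B = A^{1/2} f(A^{-1/2}BA^{-1/2})A^{1/2}$ under the $A \to A + \ep I$ limit — one must ensure the limit exists and is independent of the approximating sequence, which is exactly where (M3) (continuity from above) is doing essential work and where the decreasing-net structure matters. Verifying the transformer inequality for the explicitly defined $\sm$ in the surjectivity half is the other genuinely technical point, since it requires the operator concavity of $f$ rather than mere monotonicity. I would treat the bijectivity claim as essentially a repackaging of the Kubo–Ando integral representation and cite \cite{Kubo-Ando} for the parts that are pure operator-function theory, focusing the written proof on the canonical identities $I \sm xI = f(x)I$ and $A \sm B = A^{1/2} f(A^{-1/2}BA^{-1/2})A^{1/2}$ that make the correspondence transparent.
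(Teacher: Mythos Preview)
The paper does not prove this theorem; it is quoted from \cite{Kubo-Ando} as background and no argument is supplied. There is therefore nothing in the paper to compare your proposal against.

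For what it is worth, your sketch is the standard Kubo--Ando strategy: extract $f$ from $I \sm (xI)$ via congruence (unitary) invariance, extend the identity $f(A) = I \sm A$ from finite-spectrum operators to general positive $A$ using (M3), recover $\sm$ from $f$ through the congruence formula $A \sm B = A^{1/2} f(A^{-1/2}BA^{-1/2})A^{1/2}$ with an $\ep$-limit for non-invertible $A$, and rely on operator concavity of $f$ for the transformer inequality in the surjectivity half. The two obstacles you single out --- the limit passage in $f(A) = I \sm A$ and the verification of (M2) for the explicitly constructed $\sm$ --- are precisely the points that carry the weight in the original Kubo--Ando argument, so your instinct to cite \cite{Kubo-Ando} for those operator-function-theoretic ingredients is consistent with how the present paper treats the result.
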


We call $f$ the \emph{representing function} of $\sigma$.
A connection also has a canonical characterization with respect to a Borel measure 
via a meaningful integral representation as follows.

\begin{thm}[\cite{Pattrawut}] \label{thm: 1-1 conn and measure}
Given a finite Borel measure $\mu$ on $[0,1]$, the binary operation
	\begin{align}
		A \sm B = \int_{[0,1]} A \,!_t\, B \,d \mu(t), \quad A,B \geqs 0
		\label{eq: int rep connection}
	\end{align}
	is a connection on $B(\HH)^+$.
	Moreover, the map $\mu \mapsto \sigma$ is bijective,
	in which case the representing function of $\sigma$ is given by
	\begin{align}
		f(x) = \int_{[0,1]} (1 \,!_t\, x) \,d \mu(t), \quad x \geqs 0. \label{int rep of OMF}
	\end{align}
\end{thm}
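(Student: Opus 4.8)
\section*{Proof proposal}

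The plan is to establish the four assertions of the statement in order: that the right-hand side of (\ref{eq: int rep connection}) is a well-defined positive operator, that the resulting binary operation $\sm$ is a connection, that its representing function is (\ref{int rep of OMF}), and that $\mu\mapsto\sm$ is a bijection; the last step will rely on Theorem \ref{thm: Kubo-Ando f and sigma} together with the classical (Löwner) integral representation of nonnegative operator monotone functions. First I would make sense of the operator-valued integral. For fixed $A,B\geqs 0$ and $t\in[0,1]$ one has $0\leqs A\,!_t\,B\leqs (1-t)A+tB\leqs A+B$, the first nontrivial inequality coming from operator convexity of $x\mapsto x^{-1}$ (with a limiting argument via $A+\ep I$, $B+\ep I$ when $A$ or $B$ fails to be invertible). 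For each $\xi\in\HH$ the scalar function $t\mapsto\langle(A\,!_t\,B)\xi,\xi\rangle$ is the pointwise infimum over $\ep>0$ of the continuous functions $t\mapsto\langle((A+\ep I)\,!_t\,(B+\ep I))\xi,\xi\rangle$, hence upper semicontinuous and Borel measurable. Therefore $(\xi,\eta)\mapsto\int_{[0,1]}\langle(A\,!_t\,B)\xi,\eta\rangle\,d\mu(t)$ is a bounded sesquilinear form, defining a unique $A\sm B\in\A^+$ with $0\leqs A\sm B\leqs\mu([0,1])(A+B)$, and for any bounded linear $T$ one has $T^*(A\sm B)T=\int_{[0,1]}T^*(A\,!_t\,B)T\,d\mu(t)$ directly from the form.

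Next I would verify the connection axioms. Each weighted harmonic mean $\,!_t\,$ is itself a connection, so (M1) and (M2) for $\sm$ follow by integrating the pointwise inequalities $A\,!_t\,B\leqs C\,!_t\,D$ and $C(A\,!_t\,B)C\leqs (CAC)\,!_t\,(CBC)$ against the positive measure $\mu$ and using the previous paragraph. For (M3), let $A_n\downarrow A$ and $B_n\downarrow B$; then for each $t$ we have $A_n\,!_t\,B_n\downarrow A\,!_t\,B$ by (M1) and (M3) for $\,!_t\,$, so $\{A_n\sm B_n\}$ is a decreasing, bounded sequence in $\A^+$ and converges strongly to some $T\geqs A\sm B$. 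Fixing $\xi\in\HH$, the functions $t\mapsto\langle(A_n\,!_t\,B_n)\xi,\xi\rangle$ decrease pointwise to $\langle(A\,!_t\,B)\xi,\xi\rangle$ and are dominated by the $\mu$-integrable constant $\langle(A_1+B_1)\xi,\xi\rangle$, so dominated convergence gives $\langle T\xi,\xi\rangle=\lim_n\int_{[0,1]}\langle(A_n\,!_t\,B_n)\xi,\xi\rangle\,d\mu(t)=\langle(A\sm B)\xi,\xi\rangle$; hence $T=A\sm B$ and (M3) holds. Thus $\sm$ is a connection, and evaluating the defining form at $A=I$, $B=xI$ gives $I\sm(xI)=\bigl(\int_{[0,1]}(1\,!_t\,x)\,d\mu(t)\bigr)I$, so by Theorem \ref{thm: Kubo-Ando f and sigma} its representing function is exactly (\ref{int rep of OMF}).

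For bijectivity, since $\sm\mapsto f$ is a bijection (Theorem \ref{thm: Kubo-Ando f and sigma}), it suffices to show that $\mu\mapsto f_\mu$, with $f_\mu(x)=\int_{[0,1]}(1\,!_t\,x)\,d\mu(t)$, is a bijection from the finite positive Borel measures on $[0,1]$ onto the operator monotone functions $\R^+\to\R^+$. The substitution $\ld=t/(1-t)$ identifies $1\,!_t\,x$ with $(1+\ld)x/(\ld+x)$, while the endpoints $t=0,1$ produce the constant function $1$ and the identity $x$; hence $f_\mu$ is precisely the generic function in Löwner's integral representation of nonnegative operator monotone functions, which gives surjectivity, the finiteness of $\mu$ being forced by $\mu([0,1])=f_\mu(1)<\infty$. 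For injectivity I would write $1\,!_t\,x=\tfrac{x}{1-x}\,(t-z_x)^{-1}$ with $z_x=x/(x-1)$ for $x\neq1$, so that $f_\mu$ determines the Cauchy transform $F(z)=\int_{[0,1]}(t-z)^{-1}\,d\mu(t)$ on $(-\infty,0)\cup(1,\infty)$, hence on all of $\CC\setminus[0,1]$ by analyticity, and then the Stieltjes inversion formula recovers $\mu$ from $F$ (the endpoint masses can also be read off directly as $\mu(\{0\})=f_\mu(0)$ and $\mu(\{1\})=\lim_{x\to\infty}f_\mu(x)/x$).

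I expect the main obstacle to be property (M3): justifying the interchange of the decreasing strong limit with the integral, which is where the finiteness of $\mu$ and the uniform bound $A\,!_t\,B\leqs A+B$ are essential. A secondary delicate point, unless one simply cites Löwner's theorem, is the uniqueness half of the last paragraph; everything else reduces to bookkeeping with the known pointwise properties of the weighted harmonic means $\,!_t\,$.
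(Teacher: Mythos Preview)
The paper does not prove this theorem; it is quoted from \cite{Pattrawut} and used as background, so there is no in-paper argument to compare your proposal against.

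On its own merits your outline is sound. Defining the integral weakly through the bounded sesquilinear form $(\xi,\eta)\mapsto\int_{[0,1]}\langle(A\,!_t\,B)\xi,\eta\rangle\,d\mu(t)$, with the uniform bound $A\,!_t\,B\leqs A+B$ and the upper-semicontinuity argument for measurability, is the standard way to make sense of \eqref{eq: int rep connection}. Axioms (M1) and (M2) then follow by integrating the corresponding inequalities for $!_t$, and your treatment of (M3) via monotone (or dominated) convergence in the scalar integrals $\int\langle(A_n\,!_t\,B_n)\xi,\xi\rangle\,d\mu(t)$ is exactly right; the finiteness of $\mu$ and the bound by $\langle(A_1+B_1)\xi,\xi\rangle$ are what make this work. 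The identification of the representing function is immediate, and your bijectivity argument---surjectivity from the L\"owner integral representation of nonnegative operator monotone functions after the change of variable $\ld=t/(1-t)$, injectivity by recognizing $f_\mu$ as encoding the Cauchy transform of $\mu$ and invoking Stieltjes inversion---is correct and is also the approach taken in the reference \cite{Pattrawut}. The one place to be careful when writing this up in full is matching the precise normalization of L\"owner's theorem you cite to the form \eqref{int rep of OMF}, and checking that the endpoint atoms $\mu(\{0\})=f(0)$ and $\mu(\{1\})=\lim_{x\to\infty}f(x)/x$ are handled consistently under the substitution; you already flag this, so there is no gap.
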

We call $\mu$ the \emph{associated measure} of $\sigma$.

\begin{thm} \label{thm: char of mean}
	Let $\sigma$ be a connection
on $B(\HH)^+$ with representing function $f$ and representing measure $\mu$. Then the following statements are equivalent.
\begin{enumerate}
    \item[(1)]    $I \sm I=I$ ;
    \item[(2)]    $A \sm A =A$ for all $A \in B(\HH)^+$ ;
    \item[(3)]    $f$ is normalized, i.e., $f(1)=1$ ;
    \item[(4)]    $\mu$ is normalized, i.e., $\mu$ is a probability measure.
\end{enumerate}
\end{thm}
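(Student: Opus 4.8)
\emph{Proof proposal.} The plan is to close the cycle of implications $(2)\Rightarrow(1)\Rightarrow(3)\Rightarrow(4)\Rightarrow(2)$, leaning on the two correspondences already recorded in Theorems~\ref{thm: Kubo-Ando f and sigma} and~\ref{thm: 1-1 conn and measure}; several of these arrows will in fact turn out to be reversible, which is harmless. The implication $(2)\Rightarrow(1)$ is immediate by taking $A=I$. For $(1)\Rightarrow(3)$ I would simply substitute $x=1$ into the defining relation $f(x)I = I\sm(xI)$ of the representing function, obtaining $f(1)I = I\sm I = I$, hence $f(1)=1$.

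The equivalence $(3)\Leftrightarrow(4)$ should drop straight out of the integral representation \eqref{int rep of OMF}. The one-line computation $1\,!_t\,1 = [(1-t)\cdot 1 + t\cdot 1]^{-1} = 1$, valid for every $t\in[0,1]$, reduces \eqref{int rep of OMF} at $x=1$ to $f(1) = \int_{[0,1]} 1\,d\mu(t) = \mu([0,1])$. Hence $f(1)=1$ holds exactly when $\mu([0,1])=1$, i.e.\ when $\mu$ is a probability measure; this settles both $(3)\Rightarrow(4)$ and $(4)\Rightarrow(3)$ at once.

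For $(4)\Rightarrow(2)$ I would start from the integral formula \eqref{eq: int rep connection} together with the pointwise identity $A\,!_t\,A = [(1-t)A^{-1}+tA^{-1}]^{-1} = A$, valid for every invertible $A>0$ and every $t\in[0,1]$. Integrating against the probability measure $\mu$ then gives $A\sm A = \int_{[0,1]} A\,d\mu(t) = \mu([0,1])\,A = A$ for all $A>0$. To extend this to an arbitrary $A\in B(\HH)^+$, I would apply the identity just proved to $A+\varepsilon I>0$, let $\varepsilon\downarrow 0$ so that $A+\varepsilon I\downarrow A$, and invoke continuity from above (M3) to conclude $A\sm A=A$.

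The truly routine content is the pair of identities $1\,!_t\,1 = 1$ and $A\,!_t\,A = A$ and the two substitutions $x=1$. The only point needing a little care is the passage from invertible to general positive operators in $(4)\Rightarrow(2)$: one must check that (M3) indeed applies to the sequence $A+\varepsilon I$ sitting in both slots, or — if one prefers to argue directly from \eqref{eq: int rep connection} instead of through (M3) — that the limit may be interchanged with the integral, which is justified by monotone convergence since $(A+\varepsilon I)\,!_t\,(A+\varepsilon I) = A+\varepsilon I$ decreases to $A$ uniformly in $t$. I expect this to be the only genuine, and rather mild, obstacle; the rest is bookkeeping among the three equivalent descriptions of $\sigma$.
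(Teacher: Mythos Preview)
Your argument is correct in every step: the cycle $(2)\Rightarrow(1)\Rightarrow(3)\Leftrightarrow(4)\Rightarrow(2)$ closes, and the passage from invertible to arbitrary $A\geqslant 0$ via $A+\varepsilon I\downarrow A$ and (M3) is exactly the right way to handle the only nontrivial point. Note, however, that the paper does not actually supply a proof of this theorem; it is stated in the introductory section as background (essentially a restatement of Kubo--Ando together with the measure correspondence of Theorem~\ref{thm: 1-1 conn and measure}), so there is nothing to compare your approach against. Your write-up would serve perfectly well as the omitted proof.
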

Hence every mean can be regarded as an average of weighted harmonic means.
From \eqref{eq: int rep connection} and \eqref{int rep of OMF} in Theorem \ref{thm: 1-1 conn and measure}, $\sigma$
and $f$ are related by
\begin{align}
	f(A) \:=\: I \sm A, \quad A \geqs 0.  \label{eq: f(A) = I sm A}
\end{align}

In this paper, we provide various characterizations of the concepts of positivity, betweenness and strictness of operator means in terms of operator monotone functions, Borel measures and certain operator equations.
It turns out that every mean satisfies the positivity property.
The betweenness is a necessary and sufficient condition for a connection
to be a mean. A mean is strict at the left (right) if and only if
it is not the left-trivial mean (the right-trivial mean, respectively).

\section{Positivity}

We say that a connection $\sigma$ satisfies the \emph{positivity property} if
\begin{align*}
    A,B>0 \implies A \sm B >0.
\end{align*}
Recall that the \emph{transpose} of a connection $\sigma$ is the connection
\begin{align*}
	(A,B) \mapsto B \sm A.
\end{align*}
If $f$ is the representing function of $\sigma$, then the representing function of its transpose is given by 
	\begin{align*}
		g(x) = xf(\frac{1}{x}), \quad x>0
	\end{align*}
	and $g(0)$ is defined by continuity. 

\begin{thm} \label{remk: A,B>0 imply A sm B>0}
Let $\sigma$ be a connection
on $B(\HH)^+$ with representing function $f$ and associated measure $\mu$.
Then the following statements are equivalent:
\begin{enumerate}
    \item[(1)]    $\sigma$  satisfies the positivity property ;
    \item[(2)]    $I \sm I >0$ ;
    \item[(3)]    $\sigma \neq 0$ (here, $0$ is the zero connection $(A,B) \mapsto 0$) ;
    \item[(4)]    for all $A \geqs 0$, $A \sm A =0 \implies A=0$ (\emph{positive definiteness}) ;
    \item[(5)]    for all $A \geqs 0$, $A \sm I =0 \implies A=0$ ;
    \item[(6)]    for all $A \geqs 0$, $I \sm A =0 \implies A=0$ ;
    \item[(7)]    for all $A \geqs 0$ and $B>0$, $A \sm B =0 \implies A=0$ ;
    \item[(8)]    for all $A > 0$ and $B \geqs 0$, $A \sm B =0 \implies B=0$ ;
    \item[(9)]    $f \neq 0$ (here, $0$ is the function $x \mapsto 0$) ;
    \item[(10)]    $x>0 \implies f(x)>0$ ;
    \item[(11)]    $\mu([0,1])>0$. 
\end{enumerate}
\end{thm}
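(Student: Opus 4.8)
The plan is to establish the equivalences by proving a cycle of implications together with a few bridging arguments, using the integral representation from Theorem~\ref{thm: 1-1 conn and measure} and the relation $f(A) = I \sm A$ from \eqref{eq: f(A) = I sm A} as the main tools. The key observation is that the positivity property and all the listed conditions are ultimately governed by whether the associated measure $\mu$ is the zero measure, i.e.\ whether $\mu([0,1]) > 0$; so I would aim to route everything through condition~(11).

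First I would handle the ``scalar'' conditions. The equivalences $(9) \Leftrightarrow (10) \Leftrightarrow (11)$ should follow quickly: since $f(x) = \int_{[0,1]} (1 \,!_t\, x)\, d\mu(t)$ and $1 \,!_t\, x > 0$ for $x > 0$ (with $1 \,!_t\, x$ a positive number for each $t \in [0,1]$, being a harmonic-type average), the integral is strictly positive precisely when $\mu$ has positive total mass, giving $(11) \Rightarrow (10)$; trivially $(10) \Rightarrow (9)$; and if $\mu([0,1]) = 0$ then $\mu = 0$ so $f \equiv 0$, giving $(9) \Rightarrow (11)$ by contraposition. Next, via Theorem~\ref{thm: Kubo-Ando f and sigma} the map $\sigma \mapsto f$ is a bijection sending the zero connection to the zero function, so $(3) \Leftrightarrow (9)$; and $I \sm I = f(1)I$, while $f(1) = \int_{[0,1]} 1 \, d\mu(t) = \mu([0,1])$, which gives $(2) \Leftrightarrow (11)$ directly and also $(2) \Leftrightarrow (3)$.

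Then I would connect these to the positivity property and the definiteness conditions. For $(1) \Rightarrow (2)$ take $A = B = I$. For the reverse direction, I would use congruence invariance: given $A, B > 0$, write $A \sm B = A^{1/2}\bigl(I \sm (A^{-1/2} B A^{-1/2})\bigr) A^{1/2} = A^{1/2} f(A^{-1/2} B A^{-1/2}) A^{1/2}$, and since $A^{-1/2} B A^{-1/2} > 0$ has spectrum bounded away from $0$, condition~(10) (equivalent to~(2)) forces $f$ to be strictly positive on that spectrum, hence $f(A^{-1/2}BA^{-1/2}) > 0$, and conjugating by the invertible $A^{1/2}$ preserves strict positivity; this gives $(2) \Rightarrow (1)$ and also, by the same conjugation trick applied to $A \sm B = 0$, the implications $(7)$ and $(8)$ reduce to $(6)$ and to the transpose statement respectively. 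For the remaining definiteness conditions I would argue: $(1) \Rightarrow (7)$ is immediate by contraposition; $(7) \Rightarrow (5)$ and $(7) \Rightarrow (6)$ take $B = I$ or $A = I$; $(5) \Rightarrow (4)$ and $(6) \Rightarrow (4)$ follow since $A \sm A \leqs A \sm I$ when... actually more carefully, from $A \sm A = 0$ and $0 \leqs A \sm A$ one does not directly get $A \sm I = 0$, so instead I would derive $(4)$ from $(10)$ directly: if $A \sm A = 0$ then $A^{1/2} f(I) A^{1/2} = f(1) A = 0$ after reducing on the range of $A$, and $f(1) > 0$ forces $A = 0$; similarly $(5),(6) \Leftarrow (10)$ via $f(A)$ having trivial kernel when $f > 0$ on $(0,\infty)$ and a short argument about the kernel of $I \sm A$. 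Finally $(4) \Rightarrow (3)$: if $\sigma = 0$ then $I \sm I = 0$ but $I \neq 0$, contradicting~(4). Assembling these implications closes all equivalences.

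The main obstacle I anticipate is the careful treatment of the definiteness conditions $(4)$, $(5)$, $(6)$ when $A$ is merely positive and possibly not invertible: the clean congruence-invariance identity $A \sm B = A^{1/2}f(A^{-1/2}BA^{-1/2})A^{1/2}$ requires $A > 0$, so for the non-invertible case one must either pass to the restriction to $\overline{\operatorname{im} A}$ and use continuity from above (M3) with an approximation $A + \varepsilon I \downarrow A$, or argue directly via the integral representation that $\langle (A \sm A)\xi, \xi\rangle = 0$ for all $\xi$ forces, through $A \,!_t\, A = A$, that $\mu \neq 0 \Rightarrow A = 0$. Getting this spectral/kernel bookkeeping right — in particular showing $\ker(I \sm A) = \ker A$ when $\mu([0,1]) > 0$, which underlies $(6)$ — is where the real work lies; the rest is formal chasing of implications.
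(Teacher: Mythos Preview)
Your overall architecture matches the paper's: both arguments route everything through condition~(10), establish the operator conditions $(1),(7),(8)$ via congruence invariance, and handle the non-invertible cases in $(4),(5),(6)$ by an $\varepsilon$-approximation or a spectral/kernel argument. The one genuine methodological difference is your derivation of $(10)$: you obtain it from~$(11)$ directly via the integral representation (the integrand $1\,!_t\,x \geqs \min(1,x)>0$ for $x>0$, so $f(x)\geqs \min(1,x)\,\mu([0,1])$), whereas the paper instead proves $(9)\Rightarrow(10)$ using only that $f$ is monotone and concave (if $f(a)=0$ for some $a>0$, monotonicity kills $f$ on $[0,a]$ and concavity forces $f\leqs 0$ on $[a,\infty)$). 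Your route is slightly more direct given Theorem~\ref{thm: 1-1 conn and measure}; the paper's route has the advantage of not invoking the measure at all for this step.

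Two small slips to fix. First, ``$(1)\Rightarrow(7)$ is immediate by contraposition'' is not correct as stated: the negation of~$(7)$ produces an $A\geqs 0$ with $A\neq 0$ and $A\sm B=0$, but $A$ need not be invertible, so this does not contradict~$(1)$. You do not actually need this implication --- your congruence reduction together with $(10)\Rightarrow(5),(6)$ already covers $(7)$ and $(8)$ --- so simply drop it. Second, the congruence reductions are mislabelled: for $A\geqs 0$, $B>0$ one has $A\sm B = B^{1/2}\bigl(B^{-1/2}AB^{-1/2}\sm I\bigr)B^{1/2}$, so $(7)$ reduces to~$(5)$ (not~$(6)$); symmetrically, for $A>0$, $B\geqs 0$ one conjugates by $A^{1/2}$ and $(8)$ reduces to~$(6)$. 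With these corrections your chain of implications closes exactly as in the paper.
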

\begin{proof}
    The implications (1) $\Rightarrow$ (2) $\Rightarrow$ (3), (4) $\Rightarrow$ (3),
    (7) $\Rightarrow$ (5) $\Rightarrow$ (3),
    (8) $\Rightarrow$ (6) $\Rightarrow$ (3)  and
    (10) $\Rightarrow$ (9) are clear.
    Using the integral representations in Theorem \ref{thm: 1-1 conn and measure}, it is straightforward to verify that
	 the representing function of the zero connection
	 $
	 	 0 \;:\; (A,B) \mapsto 0
	 $
	 is the constant function $f \equiv 0$
	 and its associated measure is the zero measure.
    Hence, we have the equivalences (3) $\Leftrightarrow$ (9) $\Leftrightarrow$ (11).

    (9) $\Rightarrow$ (10): Assume $f \neq 0$. 
    Suppose that there is an $a>0$ such that $f(a)=0$.
    Then $f(x)=0$ for all $x \leqs a$.
   The concavity of $f$ implies that $f(x) \leqs 0$ for all $x \geqs a$.
   The monotonicity of $f$ forces that $f(x)=0$ for all $x \geqs a$.
   Hence $f = 0$, a contradiction.

    (5) $\Rightarrow$ (7): Assume (5). Let $A \geqs 0$ and $B >0$ be such that $A \sm B =0$. Then
    \begin{align*}
        0 = B^{1/2}(B^{-1/2} A B^{-1/2} \sm I) B^{1/2}
    \end{align*}
    and $B^{-1/2} A B^{-1/2} \sm I =0$. Now, (5) yields $B^{-1/2} A B^{-1/2} = 0$, i.e. $A=0$.

    (6) $\Rightarrow$ (8): It is similar to (5) $\Rightarrow$ (7).

    (10) $\Rightarrow$ (1): Assume that $f(x)>0$ for all $x>0$. 
    Since $\Sp(f(A))=f(\Sp(A))$ by spectral mapping theorem, we have
    $f(A)>0$  for all $A>0$. 
       Hence, for each $A,B>0$,
    \begin{align*}
        A \sm B = A^{1/2} f(A^{-1/2}BA^{-1/2})A^{1/2} >0.
    \end{align*}
    
    (10) $\Rightarrow$ (4): Assume (10). Let $A \geqs 0$ be such that $A \sm A=0$. Then
    \begin{align*}
        A \sm A &= \lim_{\ep \downarrow 0} A_{\ep} \sm A_{\ep}
                = \lim_{\ep \downarrow 0} A_{\ep}^{1/2} (I \sm I) A_{\ep}^{1/2} \\
                &= \lim_{\ep \downarrow 0} f(1) A_{\ep}
                = f(1) A,
    \end{align*}
    here $A_{\ep} \equiv A + \ep I$.
    Since $f(1)>0$, we have $A=0$.
    
	(10) $\Rightarrow$ (5): Assume (10). Let $A \geqs 0$ be such that 
	$A \sm I =0$.
	Then $g(A)=0$ where $g$ is the representing function of the transpose of $\sigma$.
	We see that $g(x)>0$ for $x>0$. The injectivity of functional calculus implies that $g(\ld)=0$
	for all $\ld \in \Sp(A)$.
	We conclude $\Sp(A)=0$, i.e. $A=0$.
	
	(10) $\Rightarrow$ (6): Assume (10). Let $A \geqs 0$ be such that $I \sigma A =0$.
    Then $f(A)=0$. By the injectivity of functional calculus, we have $f(\ld)=0$ for all $\ld \in \Sp(A)$.
    The assumption (10) implies that $\Sp(A)=\{0\}$. Thus, $A=0$.
\end{proof}

\begin{remk}
    It is not true that $\sigma \neq 0$ implies the condition that for all $A \geqs 0$ and $B \geqs 0$,
    $A \sm B =0 \implies A=0$. Indeed, take $\sm$ to be the geometric mean and
    \begin{equation*}
        A= \left(\begin{array}{cc} 1 & 0 \\0 & 0 \\\end{array}\right), \quad
        B= \left(\begin{array}{cc} 0 & 0 \\0 & 1 \\\end{array}\right).
    \end{equation*}
\end{remk}

\section{Betweenness}

We say that a connection $\sigma$ satisfies the \emph{betweenness property}
if for each $A,B \geqs 0$,
\begin{align*}
	A \leqs B \implies A \leqs A \sm B \leqs B.
\end{align*}
 
By Theorem \ref{remk: A,B>0 imply A sm B>0}, every mean enjoys the positivity property.
In fact, the betweenness property is a necessary and sufficient condition for a connection to be a mean:

\begin{thm} The following statements are equivalent for a connection $\sigma$ with representing function $f$:
\begin{enumerate}
    \item[(1)]    $\sigma$ is a mean ;
    \item[(2)]    $\sigma$ satisfies the betweenness property ;
    \item[(3)]   for all $A \geqs 0$, $A \leqs I \implies A \leqs A \sm I \leqs I$ ;
    \item[(4)]   for all $A \geqs 0$, $I \leqs A \implies I \leqs I \sm A \leqs A$ ;
    \item[(5)]   for all $t\geqs 0$, $1 \leqs t \implies 1 \leqs f(t) \leqs t$ ;
    \item[(6)]   for all $t\geqs 0$, $t \leqs 1 \implies t \leqs f(t) \leqs 1$ ;
    \item[(7)]	 for all $A,B \geqs 0$,  $A \leqs B \implies \norm{A} \leqs \norm{A \sm B} \leqs \norm{B}$ ;
    \item[(8)]	 for all $A \geqs 0$,  
    			 $A \leqs I \implies \norm{A} \leqs \norm{A \sm I} \leqs 1$ ;
    \item[(9)]	 for all $A \geqs 0$,  
    			 $I \leqs A \implies 1 \leqs \norm{I \sm A} \leqs \norm{A}$ ; 
    \item[(10)]	 the only solution $X>0$ to the equation $X \sm X = I$ is $X=I$ ;
    \item[(11)]  for all $A>0$, the only solution $X>0$ to the equation $X \sm X =A$ is $X=A$.
\end{enumerate}
\end{thm}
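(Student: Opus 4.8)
Here is a plan of proof.

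The strategy is to use Theorem~\ref{thm: char of mean} as a hub: every one of (1)--(11) will be shown to imply, or be implied by, the equality $I \sm I = I$ (equivalently $f(1) = 1$), which that theorem identifies with $\sigma$ being a mean. Two facts are used throughout. First, the representing function $f$ is operator monotone, hence nonnegative, monotone increasing and concave on $\R^+$, with $f(0) \geqs 0$ (as recalled in the introduction). Second, putting $C = X^{1/2}$ and $A = B = I$ in the congruence-invariance identity yields
\[
	X \sm X \:=\: X^{1/2}(I \sm I) X^{1/2} \:=\: f(1)\, X, \qquad X > 0 .
\]

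First I would prove the implications emanating from (1). If $\sigma$ is a mean and $A \leqs B$, then (M1) applied to the pairs $A \leqs A$, $A \leqs B$ and then $A \leqs B$, $B \leqs B$ gives $A = A \sm A \leqs A \sm B \leqs B \sm B = B$, which is (1)$\Rightarrow$(2). Now (2)$\Rightarrow$(3), (2)$\Rightarrow$(4) and (2)$\Rightarrow$(7) are specializations or relabelings, and (7)$\Rightarrow$(8), (7)$\Rightarrow$(9) are further specializations; the normed versions use only that $0 \leqs X \leqs Y$ forces $\norm{X} \leqs \norm{Y}$. Finally (1)$\Rightarrow$(11) is immediate from $X \sm X = X$ (so $X \sm X = A \iff X = A$), and (11)$\Rightarrow$(10) is the case $A = I$.

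Next I would close every loop back to (1). Each of (3), (4), (8), (9), (5), (6) and (10) reduces, by a single substitution, to $f(1) = 1$ (equivalently $I \sm I = I$): put $A = I$ in (3), (4), (8), (9) (noting $I \leqs I$), put $t = 1$ in (5) and (6), or observe that in (10) the operator $I$ satisfies $X \sm X = I$, i.e.\ $f(1) I = I$. By Theorem~\ref{thm: char of mean}, each therefore implies (1). The only step that is not purely formal is (1)$\Rightarrow$(5) together with (1)$\Rightarrow$(6): here I would compare the graph of $f$ with the chord $\ell(t) = f(0)(1-t) + t$ through $(0, f(0))$ and $(1, 1)$. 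Since $f$ is concave, $f(t) \geqs \ell(t)$ for $0 \leqs t \leqs 1$ and $f(t) \leqs \ell(t)$ for $t \geqs 1$; since $f(0) \geqs 0$ one has $\ell(t) \geqs t$ for $t \leqs 1$ and $\ell(t) \leqs t$ for $t \geqs 1$; and monotonicity gives $f(t) \leqs f(1) = 1$ for $t \leqs 1$ and $f(t) \geqs f(1) = 1$ for $t \geqs 1$. Combining these yields (5) and (6). (Alternatively, use \eqref{int rep of OMF} with $\mu$ a probability measure and bound the integrand $x/((1-t)x+t)$ between $1$ and $x$ when $x \geqs 1$.)

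I expect the main obstacle to be purely organizational: choosing a spanning family of implications among the eleven items so that none is missed and the norm-monotonicity and scalar-substitution arguments are invoked once rather than repeatedly. One point that deserves care is the reading of (10) and (11): the phrase ``the only solution $X > 0$ is $\ldots$'' is taken to assert that the displayed operator is itself a solution, so that (10) directly gives $I \sm I = I$ and (11) gives $A \sm A = A$; should one prefer the vacuous reading, the identity $X \sm X = f(1) X$ resolves it, since for $X > 0$ the equation $X \sm X = A$ is solvable precisely when $f(1) > 0$, with unique solution $f(1)^{-1} A$, whence (11) forces $f(1)^{-1} A = A$, i.e.\ $f(1) = 1$.
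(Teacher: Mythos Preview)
Your proposal is correct and follows essentially the same hub-and-spoke structure as the paper: show (1)$\Rightarrow$(2), specialize to get (3), (4), (7), (8), (9), then close each back to (1) by plugging in $A=I$ (or $t=1$) to extract $f(1)=1$; handle (10), (11) via $X\sm X=f(1)X$.

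The one genuine difference is your route to (5) and (6). You prove (1)$\Rightarrow$(5),(6) directly from the analytic properties of $f$ (monotonicity plus a concavity/chord comparison with $\ell(t)=f(0)(1-t)+t$, or alternatively the integral formula \eqref{int rep of OMF}). The paper instead routes through (2): once (1)$\Rightarrow$(2) is established, specializing to $A=I$, $B=tI$ immediately gives $I\leqs f(t)I\leqs tI$ for $t\geqs 1$, i.e.\ (5), and similarly (6). The paper's route is shorter and avoids invoking concavity at all; your argument is self-contained at the scalar level and would survive even if one did not first prove the operator-level betweenness. Both are perfectly valid.
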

\begin{proof}
The implications (2) $\Rightarrow$ (3), (2) $\Rightarrow$ (4), (2) $\Rightarrow$ (7) $\Rightarrow$ (8) 
and (11) $\Rightarrow$ (10) $\Rightarrow$ (1) 
are clear.

    (1) $\Rightarrow$ (2): Let $A,B \geqs 0$ be such that $A \leqs B$.
    The fixed point property and the monotonicity of $\sigma$ yield
    \begin{align*}
    	A = A \sm A \leqs A \sm B \leqs B \sm B =B.
    \end{align*}

    (3) $\Rightarrow$ (1): Since $I \leqs I$, we have $I \leqs I \sm I \leqs I$, i.e. $I \sm I =I$. Hence $\sigma$ is a mean by Theorem \ref{thm: char of mean}.
    
 	(4) $\Rightarrow$ (1): It is similar to  (3) $\Rightarrow$ (1).
 	
 	(8) $\Rightarrow$ (1): We have $1=\norm{I} \leqs \norm{I \sm I} \leqs 1$.
 	Hence, 
 	\begin{align*}
 		f(1) = \norm{f(1) I} = \norm{I \sm I} = 1.
 	\end{align*}
 	Therefore, $\sigma$ is a mean by Theorem \ref{thm: char of mean}.
 	
 	(2) $\Rightarrow$ (5): If $t \geqs 1$, then $I \leqs I \sm (tI) \leqs tI$
    which is $I \leqs f(t)I \leqs tI$, i.e. $1 \leqs f(t) \leqs t$.

    (5) $\Rightarrow$ (1): We have $f(1)=1$.

    (2) $\Rightarrow$ (6) $\Rightarrow$ (1): It is similar to (2) $\Rightarrow$ (5) $\Rightarrow$ (1).
 	
    (1) $\Rightarrow$ (11): Let $A>0$. Consider $X>0$ such that $X \sm X =A$.
    Then by the congruence invariance of $\sigma$, we have
    \begin{align*}
    	X = X^{1/2} (I \sm I) X^{1/2} = X \sm X = A.
	\end{align*}     
\end{proof}

\begin{remk}
    For a connection $\sigma$ and $A,B \geqs 0$, the operators $A,B$ and $A \sm B$ need not be comparable.
    The previous theorem tells us that if $\sigma$ is a mean, then the condition $0 \leqs A \leqs B$ guarantees
    the comparability between $A,B$ and $A \sigma B$.
\end{remk}

\section{Strictness}

We consider the strictness of Kubo-Ando means as that for scalar means in \cite{Toader-Toader}:

\begin{defn}
    A mean $\sigma$ on $B(\HH)^+$ is said to be
    \begin{itemize}
    	\item   \emph{strict at the left} if for each $A>0,B>0$, $A \sm B = A \implies A=B$,
        \item   \emph{strict at the right} if for each $A>0,B>0$, 
        $A \sm B = B \implies A=B$,

        \item   \emph{strict} if it is both strict at the right and the left.
    \end{itemize}
\end{defn}

The following lemmas are easy consequences of the fact that
an operator monotone function
$f: \R^+ \to \R^+$ is always monotone, concave and differentiable.

\begin{lem} \label{lem 1}
	If $f:\R^+ \to \R^+$ is an operator monotone function such that $f$ is a constant 
	on an interval $[a,b]$ with $a<b$, then $f$ is a constant on $\R^+$.
\end{lem}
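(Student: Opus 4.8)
The plan is to use a rigidity property of operator monotone functions that is stronger than concavity, namely L\"owner's divided‑difference criterion: if $f$ is operator monotone on an open interval $J$, then for every finite family $x_1,\dots,x_n\in J$ the divided‑difference matrix with off‑diagonal entries $\frac{f(x_i)-f(x_j)}{x_i-x_j}$ and diagonal entries $f'(x_i)$ is positive semidefinite. I would apply this in the simplest case $n=2$. Pick any $x_0\in(a,b)$; since $f$ is constant near $x_0$ we have $f'(x_0)=0$. For arbitrary $y\in(0,\infty)$ with $y\neq x_0$, positive semidefiniteness of the $2\times2$ matrix built from $\{x_0,y\}$ forces $f'(x_0)\,f'(y)\geqs\bigl(\tfrac{f(x_0)-f(y)}{x_0-y}\bigr)^{2}$, and the left‑hand side is $0$, so $f(y)=f(x_0)$. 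Hence $f\equiv f(x_0)$ on $(0,\infty)$, and continuity of $f$ at $0$ extends the conclusion to all of $\R^+$.

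A self‑contained alternative stays inside the machinery of this paper. By Theorem \ref{thm: 1-1 conn and measure} we may write $f(x)=\int_{[0,1]}(1\,!_t\,x)\,d\mu(t)$ for a finite Borel measure $\mu$ on $[0,1]$, where $1\,!_t\,x=\frac{x}{(1-t)x+t}$. Differentiating under the integral sign (justified by dominated convergence, since on any compact subinterval of $(0,\infty)$ both the integrand and its $x$‑derivative are uniformly bounded) yields $f'(x)=\int_{[0,1]}\frac{t}{\bigl((1-t)x+t\bigr)^{2}}\,d\mu(t)$. The hypothesis $f'\equiv0$ on $(a,b)$ together with the nonnegativity of the integrand forces $t=0$ for $\mu$‑almost every $t$, i.e.\ $\mu$ is concentrated at $0$; substituting back gives $f(x)=\mu(\{0\})\,(1\,!_0\,x)=\mu([0,1])$ for all $x$, a constant.

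The genuinely delicate point is the behaviour of $f$ to the left of $[a,b]$. The right side is almost free from concavity and monotonicity alone: $f$ constant on $[a,b]$ forces $f'(b)=0$, and since $f'$ is nonincreasing and nonnegative it must vanish throughout $[b,\infty)$, so $f$ is already constant on $[a,\infty)$. But concavity, monotonicity and differentiability do not rule out a nondecreasing concave function that is flat on a subinterval yet keeps increasing to its left, so something special to operator monotone functions---the L\"owner matrix criterion, or equivalently their integral/analytic representation---is needed to control $[0,a)$. I expect that asymmetry to be the only real obstacle, the remainder being routine.
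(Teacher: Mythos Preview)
Your proposal is correct. Both the L\"owner $2\times 2$ matrix argument and the integral-representation argument are valid and cleanly written; the latter is particularly neat because it stays within the paper's own toolkit.

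The paper does not actually write out a proof: it simply declares Lemmas~\ref{lem 1} and~\ref{lem 2} to be ``easy consequences of the fact that an operator monotone function $f:\R^+\to\R^+$ is always monotone, concave and differentiable.'' Your final paragraph puts its finger on exactly the right issue with taking that sentence at face value. Monotonicity, concavity and mere $C^1$-differentiability do \emph{not} force a function that is flat on $[a,b]$ to be constant on $[0,a)$; e.g.\ $f(x)=2x-x^2$ on $[0,1]$ and $f\equiv 1$ on $[1,\infty)$ is nondecreasing, concave and $C^1$, flat on $[1,2]$, yet not constant. What the paper presumably has in mind---but does not say---is that operator monotone functions on $(0,\infty)$ are in fact real-analytic (they are restrictions of Pick functions), so the identity theorem finishes the left side immediately. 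Your two arguments make the genuinely operator-monotone input explicit rather than hiding it behind the word ``differentiable,'' and the integral-representation route has the additional virtue of avoiding any appeal to analyticity or to L\"owner's theorem beyond what the paper has already set up in Theorem~\ref{thm: 1-1 conn and measure}.
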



\begin{lem} \label{lem 2}
	If $f:\R^+ \to \R^+$ is an operator monotone function such that $f$ is the identity function 
	on an interval $[a,b]$ with $a<b$, then $f$ is the identity on $\R^+$.
\end{lem}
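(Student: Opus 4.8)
The plan is to derive Lemma~\ref{lem 2} from Lemma~\ref{lem 1} by passing to the transpose. First, since $a<b$ we may assume $a>0$ (otherwise replace $[a,b]$ by $[b/2,b]$, on which $f$ is still the identity). By Theorem~\ref{thm: Kubo-Ando f and sigma}, $f$ is the representing function of a unique connection $\sigma$; let $g$ be the representing function of the transpose of $\sigma$, so that $g(x)=xf(1/x)$ for $x>0$ (with $g(0)$ defined by continuity), as recalled before Theorem~\ref{remk: A,B>0 imply A sm B>0}. In particular $g$ is operator monotone. Now if $x\in[1/b,1/a]$ then $1/x\in[a,b]$, whence $f(1/x)=1/x$ and $g(x)=x\cdot(1/x)=1$; since $1/b<1/a$, the function $g$ is constant on a nondegenerate interval. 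Lemma~\ref{lem 1} then forces $g$ to be constant on all of $\R^+$, the constant being $1$. Unwinding, $xf(1/x)=1$ for every $x>0$, i.e. $f(y)=y$ for all $y>0$, and continuity gives $f(0)=0$; thus $f$ is the identity on $\R^+$.

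A second, self-contained route uses the integral representation of Theorem~\ref{thm: 1-1 conn and measure}. Again assume $a>0$. Since $f(x)=\int_{[0,1]}(1\,!_t\,x)\,d\mu(t)=\int_{[0,1]}\frac{x}{(1-t)x+t}\,d\mu(t)$ for $x>0$ and $f(x)=x$ on $[a,b]$, dividing by $x$ gives $\int_{[0,1]}\frac{1}{(1-t)x+t}\,d\mu(t)=1$ for $x\in\{a,b\}$; subtracting these two equalities yields $\int_{[0,1]}\frac{(1-t)(b-a)}{[(1-t)a+t]\,[(1-t)b+t]}\,d\mu(t)=0$. The integrand is nonnegative on $[0,1]$, so it vanishes $\mu$-almost everywhere, which forces $\mu$ to be concentrated at the point $1$. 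Then $f(x)=\mu(\{1\})\,x$, and $f(x)=x$ on $[a,b]$ gives $\mu(\{1\})=1$, so once more $f$ is the identity on $\R^+$.

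I expect the crux to be the choice of reduction rather than any computation. A direct attack from monotonicity, concavity and differentiability alone stalls: these give only $f'\leqs 1$ on $[b,\infty)$, $f'\geqs 1$ on $(0,a]$, and—from the concavity of $x\mapsto f(x)-x$—the inequality $f(x)\leqs x$ on $\R^+$, all of which are compatible with $f$ not being the identity. Both arguments above bypass this obstacle: the first transfers the poorly controlled behaviour of $f$ to the left of $[a,b]$ into the harmless behaviour of $g$ near $+\infty$, where Lemma~\ref{lem 1} applies; the second exploits the fixed sign of the kernel. Accordingly, the step most needing care is the legitimacy of the reduction—in the first route, that the transpose of an operator monotone function from $\R^+$ to $\R^+$ is again operator monotone (which follows from the transpose formula together with the bijection of Theorem~\ref{thm: Kubo-Ando f and sigma}); in the second, the differentiation-free manipulation of the integral and the nonnegativity of $\frac{(1-t)(b-a)}{[(1-t)a+t][(1-t)b+t]}$ for $t\in[0,1]$.
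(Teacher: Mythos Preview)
Both of your arguments are correct. The paper does not actually supply a proof of Lemma~\ref{lem 2}; it only asserts that Lemmas~\ref{lem 1} and~\ref{lem 2} are ``easy consequences'' of operator monotone functions being monotone, concave and differentiable. Your routes are therefore genuinely different from what the paper hints at: the first reduces Lemma~\ref{lem 2} to Lemma~\ref{lem 1} via the transpose $g(x)=xf(1/x)$, and the second reads off $\mu=\delta_1$ directly from the integral representation of Theorem~\ref{thm: 1-1 conn and measure}.

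Your remark that monotonicity, concavity and once-differentiability alone do not suffice is well taken; for instance $f(x)=x$ on $[0,1]$ and $f(x)=2-1/x$ on $[1,\infty)$ is monotone, concave and $C^1$, agrees with the identity on $[1/2,1]$, yet is not the identity. What the paper presumably has in mind is the stronger fact that an operator monotone function on $(0,\infty)$ is real analytic (it extends to a Pick function on the upper half-plane), after which both lemmas are immediate from the identity theorem for analytic functions. Compared with that, your transpose argument has the virtue of using only Lemma~\ref{lem 1} together with the Kubo--Ando machinery already set up in Sections~1--2, while your measure-theoretic argument is fully self-contained and pinpoints the associated measure. Either proof could replace the paper's omitted one without introducing any circularity.
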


\begin{thm} \label{thm: strict_left trivial}
Let $\sigma$ be a mean with representing function $f$ and associated measure $\mu$.
Then the following statements are equivalent: 
\begin{enumerate}
    \item[(1)]   $\sigma$ is strict at the left ;
    \item[(2)]   $\sigma$ is not the left-trivial mean ;
    \item[(3)]   for all $A \geqs 0$, $I \sm A =I \implies A=I$ ;
    \item[(4)]   for all $A > 0$, $A \sm I =A \implies A=I$ ;
    \item[(5)]   for all $A>0$ and $B \geqs 0$, $A \sm B =A \implies A=B$ ;
    \item[(6)]   for all $A \geqs 0$, $I \leqs I \sm A \implies I \leqs A$ ;
    \item[(7)]   for all $A \geqs 0$, $I \sm A \leqs I \implies A \leqs I$ ;
    \item[(8)]   for all $A > 0$, $A \leqs A \sm I \implies A \leqs I$ ;
    \item[(9)]   for all $A > 0$, $A \sm I \leqs A \implies I \leqs A$ ;
    \item[(10)]   for all $A>0$ and $B \geqs 0$, $A \leqs  A \sm B \implies A \leqs B$ ;
    \item[(11)]   for all $A>0$ and $B \geqs 0$, $A \sm B \leqs  A \implies B \leqs A$ ;
    \item[(12)]	  $f$ is not the constant function $x \mapsto 1$ ;
    \item[(13)]   for all $x \geqs 0$, $f(x)=1 \implies x=1$ ;
    \item[(14)]   for all $x \geqs 0$, $f(x) \geqs 1 \implies x \geqs 1$ ;
    \item[(15)]   for all $x \geqs 0$, $f(x) \leqs 1 \implies x \leqs 1$ ;
    \item[(16)]	  $\mu$ is not the Dirac measure at $0$.
\end{enumerate}
\end{thm}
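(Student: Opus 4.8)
The plan is to reduce every one of the sixteen conditions to one of the three elementary scalar conditions (13), (14), (15) on the representing function $f$, and then to show that (12)--(16) are all equivalent to each other. The two levers are: (i) the congruence identity $A \sm B = A^{1/2} f(A^{-1/2}BA^{-1/2}) A^{1/2}$ for $A>0$ and $B \geqs 0$, whose special case $A=I$ is $I \sm A = f(A)$ from \eqref{eq: f(A) = I sm A}, together with the dual identity $A \sm I = g(A)$, where $g(x)=xf(1/x)$ is the representing function of the transpose of $\sigma$; and (ii) injectivity of the continuous functional calculus, which turns an operator relation $h(X)=I$, $h(X)\geqs I$ or $h(X)\leqs I$ into the assertion that the corresponding numerical relation holds at every point of $\Sp(X)$.

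First I would settle the scalar layer. Since $\sigma$ is a mean, $f(1)=1$ by Theorem \ref{thm: char of mean}, and $f$ is monotone and concave. If $f(x_0)=1$ for some $x_0\neq 1$, then by monotonicity $f$ is constant, equal to $1$, on the nondegenerate interval between $x_0$ and $1$, hence $f\equiv 1$ on $\R^+$ by Lemma \ref{lem 1}; this is (12) $\Rightarrow$ (13). Monotonicity and $f(1)=1$ then give (13) $\Rightarrow$ (14) and (13) $\Rightarrow$ (15) at once (if $f(x_0)\geqs 1$ with $x_0<1$ then $f(x_0)\leqs f(1)=1$, forcing $f(x_0)=1$ and, by (13), $x_0=1$, a contradiction; symmetrically for (15)), while (14) $\Rightarrow$ (12) and (15) $\Rightarrow$ (12) are immediate since the constant function $1$ violates (14) at $x=1/2$ and (15) at $x=2$. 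Moreover (2) $\Leftrightarrow$ (12) because $\omega_l$ has representing function $x\mapsto 1$ (Theorem \ref{thm: Kubo-Ando f and sigma}), and (12) $\Leftrightarrow$ (16) because $1\,!_0\,x=1$ for every $x\geqs 0$, so by \eqref{int rep of OMF} the Dirac measure at $0$ is exactly the measure associated to the constant function $1$, while $\mu\mapsto f$ is injective by Theorem \ref{thm: 1-1 conn and measure}.

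Next I would lift each operator condition to the spectrum. For $A>0$ and $B\geqs 0$, put $C=A^{-1/2}BA^{-1/2}$; conjugating by $A^{-1/2}$ turns $A\sm B=A$, $A\leqs A\sm B$, $A\sm B\leqs A$ into $f(C)=I$, $I\leqs f(C)$, $f(C)\leqs I$, i.e.\ respectively into ``$f\equiv 1$ on $\Sp(C)$'', ``$f\geqs 1$ on $\Sp(C)$'', ``$f\leqs 1$ on $\Sp(C)$''. Granting (13), (14), (15) respectively one gets $\Sp(C)\subseteq\{1\}$, $\Sp(C)\subseteq[1,\infty)$, $\Sp(C)\subseteq[0,1]$, hence $C=I$, $C\geqs I$, $C\leqs I$, hence $B=A$, $B\geqs A$, $B\leqs A$; this yields (13) $\Rightarrow$ (5), (13) $\Rightarrow$ (1) (here $B>0$ forces $C>0$, so no spectral point at $0$), (14) $\Rightarrow$ (10), (15) $\Rightarrow$ (11). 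Specializing $A=I$ gives (5) $\Rightarrow$ (3), (10) $\Rightarrow$ (6), (11) $\Rightarrow$ (7), and testing (3), (6), (7) on $A=x_0I$ returns (13), (14), (15). The right-handed conditions (4), (8), (9) are handled identically after replacing $\sigma$ by its transpose: $A\sm I=g(A)$, and for $\lambda>0$ one has $g(\lambda)=\lambda\Leftrightarrow f(1/\lambda)=1$, $g(\lambda)\geqs\lambda\Leftrightarrow f(1/\lambda)\geqs 1$, $g(\lambda)\leqs\lambda\Leftrightarrow f(1/\lambda)\leqs 1$, so the substitution $y=1/\lambda$ matches (4) with (13), (8) with (14), and (9) with (15). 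Finally (1) $\Rightarrow$ (2) is clear because $\omega_l$ is not strict at the left ($I\sm 2I=I$ while $2I\neq I$), and this closes all the loops.

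The main obstacle is the single analytic step inside the scalar layer, the implication (12) $\Rightarrow$ (13): this is exactly where one must know that a normalized operator monotone function attaining the value $1$ at a point other than $1$ is forced to be the constant $1$, which rests on Lemma \ref{lem 1} and hence on concavity and monotonicity. Everything else is bookkeeping --- the uniform translation of operator identities and inequalities into conditions on the spectrum via functional calculus, the passage to the transpose for the right-handed statements, and the harmless check that the non-invertible boundary case causes no trouble because (14) already rules out $f(0)\geqs 1$.
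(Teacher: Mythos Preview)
Your proof is correct and rests on the same ingredients as the paper's (Lemma~\ref{lem 1}, the spectral mapping theorem, congruence invariance), but the organization is genuinely different. The paper works outward from (2), proving (2)$\Rightarrow$(3),(4),(6),(7),(13)--(15) one at a time, each via a separate spectral argument invoking Lemma~\ref{lem 1} (or Lemma~\ref{lem 2} for (2)$\Rightarrow$(4)), and then pushes (3)$\Rightarrow$(5), (6)$\Rightarrow$(8),(10), (7)$\Rightarrow$(9),(11) by congruence. You instead settle the scalar layer (12)--(16),(2) first --- applying Lemma~\ref{lem 1} exactly once, in (12)$\Rightarrow$(13) --- and then lift every operator statement uniformly by conjugating to $f(C)$ or $g(A)$ and reading off the spectral condition. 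This buys you two things: a more systematic exposition with no repeated appeals to Lemma~\ref{lem 1}, and the complete avoidance of Lemma~\ref{lem 2}, since you translate $g(\lambda)=\lambda$ into $f(1/\lambda)=1$ and invoke (13) rather than arguing that $g$ equals the identity on an interval. The paper's route is occasionally more direct (its (6)$\Rightarrow$(8) is a one-line conjugation by $A^{-1/2}$), but yours makes the logical structure --- scalar core plus functional-calculus lift --- more transparent.
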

\begin{proof}
	It is clear that $(5) \Rightarrow (1)$ and each of $(1),(4),(6)-(11)$  implies $(2)$. Also, each of $(13)-(15)$ implies $(12)$.
	
	(2) $\Rightarrow$ (3): Let $A \geqs 0$ be such that $I \sm A =I$. Then $f(A)=I$.
	Hence, $f(\ld)=1$ for all $\ld \in \Sp(A)$.
	Suppose that $\ap \equiv \inf \Sp(A) < r(A)$ where $r(A)$ is the spectral radius of $A$.
	Then $f(x)=1$ for all $x \in [\ap, r(A)]$.
	It follows that $f \equiv 1$ on $\R^+$ by Lemma \ref{lem 1}.
	This contradicts the assumption (2).
	We conclude $\ap =r(A)$, i.e. $\Sp(A)=\{\ld \}$ for some $\ld \geqs 0$.
	Suppose now that $\ld <1$. Since $f(1)=1$, we have that $f$ is a constant on the interval $[\ld,1]$.
	Again, Lemma \ref{lem 1} implies that $f \equiv 1$ on $\R^+$, a contradiction.
	Similarly, $\ld >1$ gives a contradiction. 
	Thus $\ld =1$, which implies $A=I$. 
	
	(2) $\Rightarrow$ (4): Let $A > 0$ be such that $A \sm I =A$. Then $g(A)=I$ where $g$ is the representing
	function of the transpose of $\sigma$.
	Hence, $g(\ld)=\ld$ for all $\ld \in \Sp(A)$.
	Suppose that $\ap \equiv \inf \Sp(A) < r(A)$.
	Then $g(x)=x$ for all $x \in [\ap, r(A)]$.
	It follows that $g(x)=x$ on $\R^+$ by Lemma \ref{lem 2}.
	Hence, the transpose of $\sigma$ is the right-trivial mean. 
	This contradicts the assumption (2).
	We conclude $\ap =r(A)$, i.e. $\Sp(A)=\{\ld \}$ for some $\ld \geqs 0$.
	Suppose now that $\ld <1$. Since $g(1)=1$, we have that $g(x)=x$ on the interval $[\ld,1]$.
	Lemma \ref{lem 2} forces $g(x)=x$ on $\R^+$, a contradiction.
	Similarly, $\ld >1$ gives a contradiction. 
	Thus $\ld =1$, which implies $A=I$.
	
	(3) $\Rightarrow$ (5): Use the congruence invariance of $\sigma$.
	
	(2) $\Rightarrow$ (6): Assume that $\sigma$ is not the left-trivial mean.
	Let $A \geqs 0$ be such that $I \sm A \leqs I$. Then $f(A) \geqs I$.
	The spectral mapping theorem implies that $f(\ld) \geqs 1$ for all $\ld \in \Sp(A)$.
	Suppose there exists a $ t \in \Sp(A)$ such that $t<1$. Since $f(t) \leqs f(1)=1$, we have
	$f(t)=1$. It follows that $f(x)=1$ for $t \leqs x \leqs 1$.
	By Lemma \ref{lem 1}, $f \equiv 1$ on $\R^+$, a contradiction.
	We conclude $\ld \geqs 1$ for all $\ld \in \Sp(A)$, i.e. $A \geqs I$.
	
	(2) $\Rightarrow$ (7): It is similar to (2) $\Rightarrow$ (6).
	
	(6) $\Rightarrow$ (8): Assume (6). Let $A>0$ be such that 
	$A  \leqs A \sm I$.
	Then
	\begin{align*}
		A \leqs A^{1/2} (I \sm A^{-1})A^{1/2},
	\end{align*}
	which implies $I \leqs I \sm A^{-1}$. By (6), we have $I \leqs A^{-1}$ or $A \leqs I$.
	
	(7) $\Rightarrow$ (9): It is similar to (6) $\Rightarrow$ (8).
	
	(6) $\Rightarrow$ (10): Use the congruence invariance of $\sigma$.
	
	(7) $\Rightarrow$ (11): Use the congruence invariance of $\sigma$.
	
	(2) $\Leftrightarrow$ (12) $\Leftrightarrow$ (16): 
	Note that the representing function of the left-trivial mean
	is the constant function $f \equiv 1$. Its associated measure is the Dirac measure at $0$.
	
	(2) $\Rightarrow$ (13). Assume (2). Let $t \geqs 0$ be such that $f(t)=1$.
	Suppose that there is a $\ld \neq 1$ such that $f(\ld)=1$.
	It follows that $f(x) =1$ for all $x$ lying between $\ld$ and $1$.
	Lemma \ref{lem 1} implies that $f \equiv 1$ on $\R^+$, contradicting the assumption (2). 
	
	(2) $\Rightarrow$ (14), (15). Use the argument in the proof (2) $\Rightarrow$ (13). 
\end{proof}

\begin{thm} \label{thm: strict_right trivial}
Let $\sigma$ be a mean with representing function $f$ and associated measure $\mu$.
Then the following statements are equivalent: 
\begin{enumerate}
    \item[(1)]   $\sigma$ is strict at the right ;
    \item[(2)]   $\sigma$ is not the right-trivial mean ;
    \item[(3)]   for all $A \geqs 0$, $A \sm I =I \implies A=I$ ;
    \item[(4)]   for all $A > 0$, $I \sm A = A \implies A=I$ ;
    \item[(5)]   for all $A \geqs 0$ and $B > 0$, $A \sm B = B \implies A=B$ ;
    \item[(6)]   for all $A \geqs 0$, $I \leqs A \sm I \implies I \leqs A$ ;
    \item[(7)]   for all $A \geqs 0$, $A \sm I \leqs I \implies A \leqs I$ ;
    \item[(8)]   for all $A > 0$, $A \leqs I \sm A \implies A \leqs I$ ;
    \item[(9)]   for all $A > 0$, $I \sm A \leqs A \implies I \leqs A$ ;
    \item[(10)]   for all $A \geqs 0$ and $B > 0$, $B \leqs  A \sm B \implies B \leqs A$ ;
    \item[(11)]   for all $A \geqs 0$ and $B > 0$, $A \sm B \leqs  B \implies A \leqs B$ ;
    \item[(12)]	  $f$ is not the identity function $x \mapsto x$ ;
    \item[(13)]	  $\mu$ is not the associated measure at $1$.
\end{enumerate}
\end{thm}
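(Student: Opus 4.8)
The idea is to deduce everything from Theorem \ref{thm: strict_left trivial} by passing to the transpose. Write $\tau$ for the transpose of $\sigma$, so that $A \tau B = B \sigma A$ for all $A,B \geqs 0$. I would begin by collecting four elementary facts. First, $\tau$ is again a mean, since $A \tau A = A \sigma A = A$. Second, by the formula recalled in Section~2 its representing function is $g(x) = x f(1/x)$, and a short computation shows that $g$ is the identity on $\R^+$ if and only if $f$ is; consequently $\sigma$ is the right-trivial mean $\om_r$ if and only if $\tau$ is the left-trivial mean $\om_l$. Third, $\om_r$ has representing function $x \mapsto x$, and since $1 \,!_1\, x = x$ for every $x$, formula \eqref{int rep of OMF} together with the bijectivity in Theorem \ref{thm: 1-1 conn and measure} identifies the associated measure of $\om_r$ as the Dirac measure at $1$; hence $\mu$ is the Dirac measure at $1$ precisely when $\sigma = \om_r$. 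Fourth, $\sigma$ is strict at the right if and only if $\tau$ is strict at the left, because $A \sigma B = B$ is the same statement as $B \tau A = B$, which after renaming the two operators is the defining condition for strictness at the left of $\tau$.

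With this dictionary in hand, the plan is simply to apply Theorem \ref{thm: strict_left trivial} to the mean $\tau$, with representing function $g$ and associated measure $\mu_\tau$, and to translate each conclusion back via $A \tau B = B \sigma A$. Concretely, statements (1)--(11) of Theorem \ref{thm: strict_left trivial} for $\tau$ become statements (1)--(11) of the present theorem after substituting $A \tau B = B \sigma A$ and, in the two-operator statements (5), (10) and (11), interchanging the names of the two operators; statement (12) for $\tau$ (that $g$ is not the constant function $x\mapsto 1$) becomes statement (12) here (that $f$ is not the identity $x \mapsto x$) by the second fact; and statement (16) for $\tau$ (that $\mu_\tau$ is not the Dirac measure at $0$) becomes statement (13) here by the third fact. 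For instance, statement (8) of Theorem \ref{thm: strict_left trivial} for $\tau$ reads ``for all $A>0$, $A \leqs A \tau I \implies A \leqs I$'', i.e. ``$A \leqs I \sigma A \implies A \leqs I$'', which is statement (8) here; and statement (6) for $\tau$, ``$I \leqs I \tau A \implies I \leqs A$'', is ``$I \leqs A \sigma I \implies I \leqs A$'', which is statement (6) here.

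The only points needing genuine care are the bookkeeping in the second and third facts — verifying $g=\mathrm{id} \iff f=\mathrm{id}$ and pinning down the associated measure of $\om_r$ — and tracking the strict ($A>0$) versus non-strict ($A \geqs 0$) side conditions when the operators are renamed in (5), (10) and (11); these match up because the corresponding hypotheses in Theorem \ref{thm: strict_left trivial} are already asymmetric in exactly the right way. Beyond that I do not expect any real obstacle: once the transpose correspondence is set up, the theorem is a formal consequence of Theorem \ref{thm: strict_left trivial}. Alternatively one could repeat the direct argument used for Theorem \ref{thm: strict_left trivial}, now invoking Lemma \ref{lem 2} (rigidity of the identity function) wherever that proof used Lemma \ref{lem 1} (rigidity of constants) and vice versa, but the transpose route is considerably shorter.
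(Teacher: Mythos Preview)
Your approach is exactly the paper's: its entire proof is the single line ``Replace $\sigma$ by its transpose in the previous theorem.'' One small slip to fix in your write-up: the ``second fact'' should read $g \equiv 1 \iff f = \mathrm{id}$, not $g = \mathrm{id} \iff f = \mathrm{id}$, since $g(x)=xf(1/x)$ gives $g(x)=1 \iff f(1/x)=1/x$; you actually invoke the correct equivalence in the second paragraph when matching statement~(12), so this is only a misstatement in the setup and does not affect the argument.
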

\begin{proof}
    Replace $\sigma$ by its transpose in the previous theorem.
\end{proof}

We immediately get the following corollaries.

\begin{cor}
    A mean is strict if and only if it is non-trivial.
\end{cor}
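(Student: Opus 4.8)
The plan is to read the corollary off directly from Theorems~\ref{thm: strict_left trivial} and~\ref{thm: strict_right trivial}, since both "strict" and "non-trivial" split as a conjunction of a left-hand and a right-hand condition. By definition a mean $\sigma$ is strict exactly when it is strict at the left \emph{and} strict at the right; and $\sigma$ is non-trivial exactly when it is neither the left-trivial mean $\om_l$ nor the right-trivial mean $\om_r$.

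First I would invoke the equivalence $(1)\Leftrightarrow(2)$ of Theorem~\ref{thm: strict_left trivial}: $\sigma$ is strict at the left if and only if $\sigma\neq\om_l$. Next I would invoke the equivalence $(1)\Leftrightarrow(2)$ of Theorem~\ref{thm: strict_right trivial}: $\sigma$ is strict at the right if and only if $\sigma\neq\om_r$. Conjoining these two biconditionals gives $\sigma$ strict $\iff$ ($\sigma\neq\om_l$ and $\sigma\neq\om_r$) $\iff$ $\sigma$ non-trivial, which is precisely the claim.

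There is essentially no obstacle here; the only point to pin down is that "non-trivial" is to be read as "distinct from both trivial means", so that the two exclusions match the two cases appearing in the theorems. If desired, one could alternatively run the same argument at the level of representing functions, using condition $(12)$ of each theorem: $\sigma$ is strict if and only if $f$ is neither the constant function $x\mapsto 1$ nor the identity function $x\mapsto x$, which is again exactly the non-triviality of $\sigma$. One may also remark in passing that $\om_l\neq\om_r$ whenever $\dim\HH\geqs 1$ (evaluate on two unequal positive scalars), but this is not needed for the stated equivalence.
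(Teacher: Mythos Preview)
Your proposal is correct and matches the paper's own treatment: the corollary is stated with no proof beyond the remark that it follows immediately from Theorems~\ref{thm: strict_left trivial} and~\ref{thm: strict_right trivial}, and you have simply spelled out that immediate deduction via the $(1)\Leftrightarrow(2)$ equivalences. Your closing remarks on the representing-function variant and on $\om_l\neq\om_r$ are harmless extras not needed for the argument.
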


\begin{cor} \label{thm: strict_cor 2}
    Let $\sigma$ be a non-trivial mean. For each $A>0$ and $B>0$, the following statements are equivalent:
    \begin{enumerate}
        \item[(i)]   $A = B$,
        \item[(ii)]   $A \sm B =A$,
        \item[(iii)]   $A \sm B = B$,
        \item[(iv)]   $B \sm A = A$,
        \item[(v)]   $B \sm A = B$.
    \end{enumerate}
\end{cor}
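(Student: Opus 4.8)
The plan is to reduce everything to the two strictness theorems via the preceding corollary. First I would record that, since $\sigma$ is non-trivial, the corollary stating that a mean is strict if and only if it is non-trivial tells us $\sigma$ is strict, hence both strict at the left and strict at the right. Consequently both Theorem \ref{thm: strict_left trivial} and Theorem \ref{thm: strict_right trivial} apply to $\sigma$; in particular condition (5) of each is available.

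Next, the implications (i) $\Rightarrow$ (ii), (iii), (iv), (v) are immediate: the fixed-point property of the mean gives $A \sm A = A$ and $B \sm B = B$, so if $A = B$ then all four equations hold at once.

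For the reverse implications I would invoke the one-sided strictness conditions, taking care to match the positivity hypotheses. Condition (5) of Theorem \ref{thm: strict_left trivial} says that for $A>0$ and $B \geqs 0$, $A \sm B = A$ forces $A = B$; this yields (ii) $\Rightarrow$ (i) directly, and, after interchanging the names of $A$ and $B$ (both of which are assumed strictly positive here), it also yields (v) $\Rightarrow$ (i). Symmetrically, condition (5) of Theorem \ref{thm: strict_right trivial} says that for $A \geqs 0$ and $B>0$, $A \sm B = B$ forces $A = B$; this gives (iii) $\Rightarrow$ (i), and with $A$ and $B$ interchanged it gives (iv) $\Rightarrow$ (i). Chaining these with the trivial implications from (i), all of (ii)--(v) are seen to be equivalent to (i).

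There is essentially no hard step here; the only thing to be careful about is the bookkeeping of which one-sided strictness condition governs which equation, and checking that the standing hypotheses $A>0$, $B>0$ are strong enough to invoke each condition together with its mirror image obtained by swapping $A$ and $B$. If one prefers, the implications (iv) $\Rightarrow$ (i) and (v) $\Rightarrow$ (i) can instead be obtained by applying Theorems \ref{thm: strict_left trivial} and \ref{thm: strict_right trivial} to the transpose of $\sigma$, whose non-triviality follows from that of $\sigma$ since the transpose of the left-trivial mean is the right-trivial mean and vice versa.
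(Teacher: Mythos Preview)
Your proposal is correct and follows exactly the intended route: the paper states this corollary as an immediate consequence of the preceding results, and your write-up simply makes explicit the invocation of the strictness corollary together with condition (5) of Theorems \ref{thm: strict_left trivial} and \ref{thm: strict_right trivial}, which is precisely what ``immediately'' means here.
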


The next result is a generalization of \cite[Theorem 4.2]{Fiedler}, in which
the mean $\sigma$ is the geometric mean.

\begin{cor} \label{thm: strict_cor 3}
    Let $\sigma$ be a non-trivial mean. For each $A>0$ and $B>0$, the following statements are equivalent:
    \begin{enumerate}
        \item[(i)]   $A \leqs B$,
        \item[(ii)]   $A \leqs A \sm B$,
        \item[(iii)]   $A \sm B \leqs B$,
        \item[(iv)]   $A \leqs B \sm A $,
        \item[(v)]   $B \sm A \leqs B$.
    \end{enumerate}
\end{cor}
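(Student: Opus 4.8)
The plan is to reduce the five-way equivalence to the triangle (i) $\Leftrightarrow$ (ii) $\Leftrightarrow$ (iii) for an arbitrary non-trivial mean, and then to obtain (iv) and (v) for free by passing to the transpose.

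Two ingredients do all the work. First, since $\sigma$ is a mean it satisfies the betweenness property (Section~3), so $A \leqs B$ already yields $A \leqs A \sm B \leqs B$; this gives (i) $\Rightarrow$ (ii) and (i) $\Rightarrow$ (iii) at once. Second, since $\sigma$ is non-trivial it is strict at the left and at the right, by the equivalences $(1)\Leftrightarrow(2)$ of Theorems \ref{thm: strict_left trivial} and \ref{thm: strict_right trivial}; hence all the mixed-inequality characterizations of those two theorems are available. In particular, condition~(10) of Theorem \ref{thm: strict_left trivial} (applicable because $A>0$) converts $A \leqs A \sm B$ into $A \leqs B$, which is (ii) $\Rightarrow$ (i); and condition~(11) of Theorem \ref{thm: strict_right trivial} (applicable because $B>0$) converts $A \sm B \leqs B$ into $A \leqs B$, which is (iii) $\Rightarrow$ (i). This closes the triangle (i) $\Leftrightarrow$ (ii) $\Leftrightarrow$ (iii).

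For the remaining statements I would pass to the transpose $\sigma'$, defined by $A \sigma' B = B \sm A$. It is again a mean, and it is again non-trivial because transposition interchanges the trivial means $\omega_l$ and $\omega_r$; so the triangle just proved applies verbatim to $\sigma'$, giving $A \leqs B \Leftrightarrow A \leqs A \sigma' B \Leftrightarrow A \sigma' B \leqs B$, that is, $A \leqs B \Leftrightarrow A \leqs B \sm A \Leftrightarrow B \sm A \leqs B$. This is precisely (i) $\Leftrightarrow$ (iv) $\Leftrightarrow$ (v), and combining with the above all five statements are equivalent. One could also bypass the transpose: (iv) $\Rightarrow$ (i) is condition~(10) of Theorem \ref{thm: strict_right trivial} after relabelling $A,B$, (v) $\Rightarrow$ (i) is condition~(11) of Theorem \ref{thm: strict_left trivial} likewise, and (i) $\Rightarrow$ (iv),(v) is the betweenness property applied to $\sigma'$.

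I do not expect a genuine obstacle here; the proof is a matter of assembling results already proved. The only points needing care are bookkeeping ones: checking that the strict-positivity hypotheses in the cited conditions (sometimes on the first argument, sometimes on the second; sometimes $>0$, sometimes $\geqs 0$) really are met in each case, and stressing that non-triviality of $\sigma$ is exactly what makes both strictness theorems applicable --- for instance for $\sigma=\omega_l$ one has $A \sm B = A$, so (ii) holds for every choice of $B$ while (i) need not, and the equivalence fails.
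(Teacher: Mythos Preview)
Your proposal is correct and is precisely the argument the paper intends: the corollary is stated immediately after Theorems \ref{thm: strict_left trivial} and \ref{thm: strict_right trivial} with no proof beyond ``We immediately get the following corollaries'', and your use of betweenness for the forward implications together with conditions~(10) and~(11) of those two theorems (plus the transpose for (iv) and (v)) is exactly how it unpacks. Your care about the positivity hypotheses is on point and everything matches.
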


\begin{remk}
    (i) It is not true that if $\sigma$ is not the left-trivial mean then for all $A \geqs 0$ and $B \geqs 0$,
    $A \sm B =A \implies A=B$. Indeed, take $\sigma$ to be the geometric mean, $A=0$ and
    \begin{equation*}
        B= \left(\begin{array}{cc} 0 & 0 \\0 & 1 \\\end{array}\right).
    \end{equation*}
    The case of right-trivial mean is just the same.

    (ii) The assumption of invertibility of $A$ or $B$
    in Corollary \ref{thm: strict_cor 2} cannot be omitted, as a counter example in (i) shows.
    As well, the invertibility of $A$ or $B$ in Corollary 
    \ref{thm: strict_cor 3} cannot be omitted.
    Consider the geometric mean and
    \begin{equation*}
        A= \left(\begin{array}{cc} 1 & 0 \\0 & 0 \\\end{array}\right),  \quad
        B= \left(\begin{array}{cc} 0 & 0 \\0 & 1 \\\end{array}\right).
    \end{equation*}
\end{remk}

\end{document}